\renewcommand{\paragraph}[1]{\par\vspace{1ex}\noindent #1}
\newcommand{\scalProd}[2]{\langle{#1},{#2}\rangle}
\theoremstyle{plain}
\newtheorem{theorem}{Theorem}
\newtheorem*{proposition*}{Proposition}
\newtheorem*{corollary*}{Corollary}
\newtheorem{lemma}[theorem]{Lemma}
\newtheorem*{theorem*}{Theorem}
\newtheorem*{lemma*}{Lemma}
\newtheorem*{conjecture*}{Conjecture}
\newtheorem*{question*}{Question}
\theoremstyle{definition}
\newtheorem*{exercise*}{Exercise}
\theoremstyle{remark}
\newtheorem*{remark*}{Remark}
\newtheorem{remsTh}[theorem]{Remarks}
\newcommand{\subclass}[1]{}
\newcommand{\enumTi}[1]{\renewcommand{\theenumi}{#1}}
\newcommand{\alphenumi}{\enumTi{\alph{enumi}}}
\newcommand{\romenumi}{\enumTi{\roman{enumi}}}
\DeclareMathOperator{\id}{id}
\newcommand{\sabs}[1]{{\lvert{#1}\rvert}}
\newcommand{\RR}{\mathbb{R}}
\newlength{\algotabbingwidth}
\newcommand{\setDef}[2]{\{{#1}\,:\,{#2}\}}
\newcommand{\ints}[1]{[{#1}]}
\newcommand{\symGr}[1]{\mathfrak{S}({#1})}
\newcommand{\altGr}[1]{\mathfrak{A}({#1})}
\newcommand{\charVec}[1]{\chi({#1})}
\DeclareMathOperator{\isoOp}{iso}
\newcommand{\isoGr}[1]{\isoOp({#1})}
\DeclareMathOperator{\POp}{P}
\DeclareMathOperator{\cardOp}{card}
\newcommand{\PCard}[1]{\POp_{\cardOp}({#1})}
\newcommand{\upop}{\varkappa}
\begin{document}

\title[Tight Lower Bounds on the Sizes of Symmetric Extensions of Permutahedra]{Tight Lower Bounds on the Sizes of Symmetric Extensions of Permutahedra and Similar Results}
\author{Kanstantsin Pashkovich}

\thanks{Supported by  the Progetto di Eccellenza 2008-2009 of the Fondazione Cassa Risparmio di Padova e Rovigo.}
\address{Dipartimento di Matematica, Universit\`a degli Studi di Padova, Via Trieste 63, 35121 Padova, Italy}
\email{pashkovich@math.unipd.it}

\begin{abstract}
It is well known that the permutahedron $\Pi_n$ has $2^n-2$ facets. The Birkhoff polytope provides a symmetric extended formulation of $\Pi_n$ of size $\Theta(n^2)$. Recently, Goemans described a non-symmetric extended formulation of $\Pi_n$ of size $\Theta(n\log n)$. In this paper, we prove that $\Omega(n^2)$ is a lower bound for the size of symmetric extended formulations of $\Pi_n$. Moreover, we prove that the cardinality indicating polytope has the same tight lower bounds for the sizes of symmetric and non-symmetric extended formulations as the permutahedron.
\end{abstract}
\maketitle

\section{Introduction}

Extended formulations of polyhedra have gained importance in the recent past, because this concept allows to represent a polyhedron by a higher-dimensional one with a simpler description. Thus, an optimization problem over an initial polytope can be easily transformed into an optimization problem over its extension. In some cases where the initial polytope has a complicated description in the initial space or even no such description is known, the reformulations via extended formulations appear to be helpful.

To illustrate the power of extended formulations let us take a look at the permutahedron~$\Pi_n\subseteq \RR^{n}$, which is the convex hull of all points obtained from the point~$(1,2,\ldots, n)\in \RR^n$ by coordinate permutations. The minimal description of~$\Pi_n$ in the space~$\RR^n$ looks as follows~\cite{CCZ09}:
\begin{align*}
\sum_{v=1}^{n}x_v&=\frac{n(n+1)}{2}&&\\
\sum_{v\in S}x_v&\ge\frac{|S|(|S|+1)}{2}&&\text{ for all }\varnothing\neq S\subset\ints{n}\,.
\end{align*}
Thus the permutahedron~$\Pi_n$ has~$n!$ vertices and~$2^n-2$ facets. At the same time it is easy to derive an extended formulation of size~$\Theta(n^2)$  from the Birkhoff polytope~\cite{CCZ09}:
\begin{equation}\label{eq:sym_ext_perm}
\begin{aligned}
\sum_{i=1}^n i z_{i,v}&=x_v&&\text{ for all }v\in\ints{n}\\
\sum_{v=1}^n z_{i,v}&=1&&\text{ for all }i\in\ints{n}\\
\sum_{i=1}^n z_{i,v}&=1&&\text{ for all }v\in\ints{n}\\
z_{i,v}\ge 0&&&\text{ for all }i,v\in\ints{n}\,.
\end{aligned}
\end{equation}
The projection of the polyhedron described by~\eqref{eq:sym_ext_perm} to the~$x$-variables gives the permutahedron~$\Pi_n$.  Clearly, every coordinate permutation of~$\RR^n$ maps~$\Pi_n$ to itself. The extended formulation~\eqref{eq:sym_ext_perm} respects this symmetry in the sense that every such permutation of the~$x$-variables can be extended by some permutation of the~$z$-variables such that these two permutations leave~\eqref{eq:sym_ext_perm} invariant (up to reordering of the constraints).

Also there exists a non-symmetric extended formulation of the permutahedron of size~$\Theta(n\log(n))$~\cite{Goe09}. This is the best one can achieve~\cite{Goe09} due to the fact that every face of~$\Pi_n$ (including the~$n!$ vertices) is a projection of some face of the extension. And since the number of faces of a polyhedron is bounded from above by~$2$ to the number of its facets, we can conclude that every extension of the permutahedron has at least~$\log_2(n!)=\Theta(n\log(n))$ facets. 

Another illustrative example is the cardinality indicating polytope~$\PCard{n}$, which is defined as the convex hull of vectors $(x,z)$ for which the following equations hold
\begin{equation*} 
 \sum_{i=1}^{n} x_i=\sum_{j=0}^{n} j z_{j+1} \quad\text{and}\quad\sum_{j=0}^{n}z_{j+1}=1
\end{equation*}
and components satisfy $x\in\{0,1\}^n$ and $z\in\{0,1\}^{n+1}$.
The cardinality indicating polytope~$\PCard{n}$ has a minimal description in the initial space $\RR^{2n+1}$ given by the following linear constraints~\cite{KLW05}:
\begin{equation}\label{eq:initial_card}
\begin{aligned}
&\sum_{i\in S}x_i \le \sum_{j=0}^{|S|} j z_{j+1} + |S|\sum_{j=|S|+1}^{n}z_{j+1}  &&\text{ for all }\varnothing\neq S\subset\ints{n}\\
&\sum_{i=1}^{n} x_i=\sum_{j=0}^{n} j z_{j+1}\\ &\sum_{j=0}^{n}z_{j+1}=1\\
&x_i\ge 0,\qquad z_j\ge 0 &&\text{ for all }i\in\ints{n}\text{ and }j\in\ints{n+1}.
\end{aligned}
\end{equation}
This shows that the cardinality indicating polytope has exponentially many facets.

 One can construct a symmetric extended formulation of the cardinality indicating polytope of size~$\Theta(n^2)$:
\begin{equation}\label{eq:sym_ext_card}
\begin{aligned}
&\sum_{i=1}^n y_{i,j}=(j-1)z_j&&\text{ for all } j\in\ints{n+1}\\
&\sum_{j=1}^{n+1}y_{i,j}=x_i&&\text{ for all }i\in\ints{n}\\
&\sum_{j=0}^{n}z_{j+1}=1\\
&0\le y_{i,j}\le z_j &&\text{ for all }i\in\ints{n}\text{ and } j\in\ints{n+1}\,,
\end{aligned}
\end{equation}
which represents the Balas extension~\cite{Bal85} for the faces of the cardinality indicating polytope induced by equations $z_j=1$, $j\in\ints{n+1}$.
There is also  a non-symmetric extended formulation of size $\Theta(n\log(n))$~\cite{KP10}, which is constructed in the similar way to the non-symmetric extended formulation of the permutahedron.

Moreover the cardinality indicating polytope~$\PCard{n}$ has a set of~$n!$ non-trivial faces, which shows that every extended formulation of the cardinality indicating polytope involves~$\Omega(n\log(n))$ inequalities. The mentioned non-trivial faces of the cardinality indicating polytope~$\PCard{n}$ are indexed by all possible permutations~$\mu$ of~$n$ elements and each of those faces is the intersection of the following~$n-1$ facets:
\begin{align*}
\sum_{\mu(v)\in \ints{q}}x_v-\sum_{k=0}^{q} k z_{k+1}-q \sum_{k=q+1}^{n}  z_{k+1} = 0\text{ for all } 1 \le q \le n-1.
\end{align*}
One can easily check that two such faces of the cardinality indicating polytope are different whenever they correspond to two different permutations. Indeed, having two different permutations~$\mu'$ and~$\mu''$ we can find~$q$ with~$1 \le q \le n-1$, such that~$\mu'^{-1}(\ints{q})$ is not equal to~$\mu''^{-1}(\ints{q})$. Then the vertex $(x,z)$ of the cardinality indicating polytope~$\PCard{n}$ defined by:
\begin{equation*}
 \begin{aligned}
&x_i=1&&\text{ if } i\in\mu'^{-1}(\ints{q})\\
&x_i=0&&\text{ otherwise }\\
&z_j=1&&\text{ if } j=q+1\\
&z_j=0&&\text{ otherwise }\\
\end{aligned}
\end{equation*}
belongs to the face indexed by the permutation~$\mu'$, but does not belong to the face indexed by~$\mu''$.

 As we show in this paper the size of the extended formulations~\eqref{eq:sym_ext_perm} and~\eqref{eq:sym_ext_card} are asymptotically optimal for symmetric formulations of the permutahedron and the cardinality indicating polytope. Thus there exists a gap in the size between symmetric and non-symmetric extended formulations of~$\Pi_n$ and of~$\PCard{n}$. This situation appears in some other cases as well, e.g. the cardinality constrained matching polytopes and the cardinality constrained cycle polytopes~\cite{KPT09}. But even if the gaps observed in those cases are more substantial, the permutahedron and the cardinality indicating polytope  are interesting because of the possibility to determine tight asymptotical lower bounds~$\Omega(n^2)$ and ~$\Omega(n\log(n))$ on the sizes of symmetric and non-symmetric extended formulations, respectively.

The paper is organized as follows. Section~\ref{sec:definitions} contains definitions of extensions, the crucial notion of section and some auxiliary results. In section~\ref{sec:symextension} we exploit the techniques from~\cite{KPT09},\cite{Yan91} and some new approaches to investigate the structure of symmetric extensions of certain size. In sections~\ref{sec:symextension_perm} and~\ref{sec:symextension_card} we apply our knowledge about the structure to get lower bounds on the size of symmetric extensions of the  permutahedron and the cardinality indicating polytope.
\paragraph{Acknowledgements.} I thank Volker Kaibel for valuable comments which led to simplifications in the proofs and for his useful recommendations on wording. I also thank the referee for his careful reading and constructive comments which helped to increase the readability of the presented paper.

\section{Extensions, Sections and Symmetry}
\label{sec:definitions}

Here we list some known definitions and results, which will be used later. For a broader discussion of symmetry in extended formulations of polyhedra we refer the reader to~\cite{KPT09}.

A polytope~$Q\subseteq\RR^d$ together with a linear map~$p:\RR^d\rightarrow\RR^m$ is called an \emph{extension} of a polytope~$P\subseteq\RR^m$ if the equality~$p(Q)=P$ holds, the \emph{size} of the extension~$Q$, $p$ is the number of facets of the polytope~$Q$. Moreover, if~$Q$ is the intersection of an affine subspace of~$\RR^d$ and the nonnegative orthant~$\RR_+^d$ then~$Q$ is called a \emph{subspace extension}. A (finite) system of linear equations and inequalities whose solutions are the points in~$Q$ is an \emph{extended formulation} for~$P$, the \emph{size} of the extended formulation is the number of inequalities in the system.

Throughout the paper we heavily deal with \emph{sections} ~$s:X\rightarrow Q$, which are maps  that assign to every vertex~$x\in X$ of~$P$  some point~$s(x)\in Q\cap p^{-1}(x)$. Such a section induces a bijection between the vertex set~$X$ of the polytope~$P$ and points~$s(X)$ in the polytope~$Q$, whose inverse map is given by~$p$.

Let~$G$ be a group with the group operation $\circ:G\times G\rightarrow G$ acting on the set~$X$ of vertices of~$P$. In other words, every group element $\pi \in G$ defines a map of $X$ on itself (for every $x \in X$ the image of $x$ under this map is denoted by $\pi.x$), satisfying:
\begin{enumerate}
  \item $(\pi \circ \sigma).x$ equals $\pi.(\sigma.x)$ for every $x\in X$ and $\pi$, $\sigma\in G$,
  \item the identity element of $G$ maps every vertex in $X$ on itself.
\end{enumerate}

 In this setting, an extension is called \emph{symmetric} with respect to the action of~$G$ on~$X$, if for every~$\pi\in G$ there is an affine isomorphism~$\upop_{\pi}:\RR^d\rightarrow\RR^d$
 with~$\upop_{\pi}.Q = Q$
and
\begin{equation}\label{eq:pGmap}
	p(\upop_{\pi}.y) = \pi.p(y) \quad\text{for all~$y\in p^{-1}(X)$.}
\end{equation}
The extension is called \emph{coordinate-symmetric} if the corresponding affine maps~$\upop_\pi$ can be chosen to be coordinate permutations, i.e. for all $\pi\in G$ we have $\upop_\pi\in\symGr{d}$, where $\symGr{d}$ is the group of all permutations of elements in $\ints{d}$.

We define an extended formulation~$A^=y=b^=$,~$A^{\le}y\le b^{\le}$ describing the polyhedron~$Q$, i.e.
\begin{equation*}
	Q=\setDef{y\in\RR^d}{A^=y=b^=, A^{\le}y\le b^{\le}}
\end{equation*}
 to be \emph{symmetric} (with respect to the action of~$G$ on the set~$X$ of vertices of~$P$) if for every~$\pi\in G$ there is an affine isomorphism~$\upop_{\pi}:\RR^d\rightarrow\RR^d$, $\upop_\pi(y)=C_\pi.y+c_\pi$, satisfying~\eqref{eq:pGmap} and such that the matrices $(A^=.C_\pi, b^=-A^=.c_\pi)$ and~$(A^\le.C_\pi, b^\le-A^\le.c_\pi)$ are equal to the matrices~$(A^=,b^=)$ and~$(A^{\le}, b^{\le})$  up to possible row reorderings. A \emph{coordinate-symmetric} extended formulation is the symmetric extended formulation with $\upop_\pi\in\symGr{d}$ for all $\pi\in G$.

Clearly, in the case of a symmetric extended formulation the maps~$\upop_{\pi}$ satisfy~$\upop_{\pi}.Q= Q$, which implies the following lemma.

\begin{lemma}\label{lem:symExtFormToSymExt}
	Every symmetric extended formulation (coordinate-symmetric extended formulation) describes a symmetric extension (coordinate-symmetric extension).
\end{lemma}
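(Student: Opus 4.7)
The plan is to verify that the affine isomorphisms $\upop_\pi$ supplied by the hypothesis of the extended formulation being symmetric already witness the symmetry of the extension $(Q,p)$. Since property~\eqref{eq:pGmap} is part of the hypothesis, the only thing left to check is that $\upop_\pi.Q = Q$ for every $\pi \in G$.

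First I would write out, for a fixed $\pi \in G$ and the corresponding $\upop_\pi(y) = C_\pi y + c_\pi$, the conditions for $\upop_\pi.y$ to satisfy the extended formulation:
\begin{equation*}
A^= \upop_\pi.y = b^= \iff (A^= C_\pi) y = b^= - A^= c_\pi,
\end{equation*}
and analogously $A^\le \upop_\pi.y \le b^\le$ is equivalent to $(A^\le C_\pi) y \le b^\le - A^\le c_\pi$. By hypothesis, the matrices $(A^= C_\pi, b^= - A^= c_\pi)$ and $(A^\le C_\pi, b^\le - A^\le c_\pi)$ coincide with $(A^=, b^=)$ and $(A^\le, b^\le)$ up to row reordering, so this latter system has the same solution set as the original one, namely $Q$. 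Hence $\upop_\pi.y \in Q$ if and only if $y \in Q$, which since $\upop_\pi$ is a bijection gives both $\upop_\pi(Q) \subseteq Q$ and $Q \subseteq \upop_\pi(Q)$, i.e., $\upop_\pi.Q = Q$.

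Together with~\eqref{eq:pGmap}, this shows that $(Q,p)$ is a symmetric extension in the sense of the earlier definition. For the coordinate-symmetric case, the additional restriction $\upop_\pi \in \symGr{d}$ is already part of the hypothesis on the extended formulation and is inherited verbatim by the extension. I do not expect a real obstacle here; the statement is essentially a direct unfolding of the two definitions, and the only substantive observation is that row reordering does not change the polyhedron defined by a system of equations and inequalities.
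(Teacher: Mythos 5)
Your proof is correct and takes the same route the paper does: the paper simply asserts that in a symmetric extended formulation the maps $\upop_\pi$ satisfy $\upop_\pi.Q = Q$, which you have spelled out by noting that row reordering preserves the solution set. There is nothing more to add.
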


On the other hand,  every symmetric extension can be transformed into a subspace coordinate-symmetric extension of a smaller or the same size (see \cite{KPT09}).
\begin{lemma}\label{thm:transform_eq}
	Every symmetric extension induces a coordinate-symmetric subspace extension of a smaller or the same size. 
\end{lemma}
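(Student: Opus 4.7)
The plan is to start from the given symmetric extension $(Q,p)$ of $P$, introduce one slack variable per facet-defining inequality of $Q$, and then show that the resulting polytope (in slack-coordinate space, with the original coordinates eliminated) is a coordinate-symmetric subspace extension of $P$ of no larger size. The symmetry transfers because each affine symmetry $\upop_\pi$ of $Q$ permutes the facet-defining inequalities, so the induced action on the slack coordinates is by coordinate permutation.

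First, take an irredundant extended formulation $A^= y = b^=$, $A^\le y \le b^\le$ for $Q$ whose $f$ inequality rows are in bijection with the $f$ facets of $Q$. By the definition of a symmetric extension, each affine isomorphism $\upop_\pi(y) = C_\pi y + c_\pi$ permutes the facet-defining inequalities, yielding permutations $\tau_\pi \in \symGr{\ints{f}}$ satisfying $A^\le_{\tau_\pi(i)} = A^\le_i C_\pi$ and $b^\le_{\tau_\pi(i)} = b^\le_i - A^\le_i c_\pi$ for all $i \in \ints{f}$. Next, define the affine slack map $\phi \colon \RR^d \to \RR^f$ by $\phi(y)_i := b^\le_i - A^\le_i y$ and put $Q' := \phi(Q)$. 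Since $Q$ is bounded, the only vector $d \in \RR^d$ with $A^= d = 0$ and $A^\le d = 0$ is $d = 0$, so $\phi$ is injective on the affine hull of $Q$ and therefore induces a bijection from $Q$ onto $Q'$.

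The image $Q'$ equals the intersection of $\RR_+^f$ with the affine subspace $\phi(\{y : A^= y = b^=\}) \subseteq \RR^f$, so $Q'$ is a subspace extension of $P$ when paired with the map $p' \colon \RR^f \to \RR^m$ defined on the affine hull of $Q'$ by $p'(\phi(y)) := p(y)$ and extended affinely to $\RR^f$; if strict linearity of $p'$ is required, it can be arranged by the standard trick of adding one extra nonnegative coordinate pinned to $1$, which affects neither coordinate-symmetry nor size. A direct calculation using the identities above shows that $\phi \circ \upop_\pi = \tau_\pi^* \circ \phi$, where $\tau_\pi^*$ denotes the coordinate permutation $(\tau_\pi^* s)_i := s_{\tau_\pi(i)}$ on $\RR^f$. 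This implies both $\tau_\pi^* . Q' = Q'$ and that $\tau_\pi^*$ intertwines $p'$ with the $G$-action on $P$, so $(Q', p')$ is coordinate-symmetric. Finally, any facet of a polytope of the form $L \cap \RR_+^f$ with $L$ an affine subspace is cut out by $\{s_i = 0\}$ for some $i \in \ints{f}$, so $Q'$ has at most $f$ facets. The main conceptual obstacle is the injectivity of $\phi$ on $Q$, which hinges on the boundedness of $Q$ and hence on there being no common direction in the kernels of $A^=$ and $A^\le$; the remaining claims reduce to short verifications.
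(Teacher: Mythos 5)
The paper itself does not prove this lemma; it is deferred to~\cite{KPT09}, and your slack-embedding strategy is the standard route one would expect to find there. However, there is a genuine gap at the very first step. You assert that the affine automorphism $\upop_\pi$ ``permutes the facet-defining inequalities'' with the literal equalities $A^\le_{\tau_\pi(i)} = A^\le_i C_\pi$ and $b^\le_{\tau_\pi(i)} = b^\le_i - A^\le_i c_\pi$. This is false in general: an affine automorphism of $Q$ permutes the \emph{facets}, but a facet-defining row of an irredundant description is determined only up to a positive scalar and up to adding multiples of the equality rows $(A^=, b^=)$. So the pulled-back row $(A^\le_i C_\pi,\; b^\le_i - A^\le_i c_\pi)$ is in general only a positive multiple of $(A^\le_{\tau_\pi(i)}, b^\le_{\tau_\pi(i)})$ modulo equality rows. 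With those scalars $\mu_i^\pi>0$ present, the identity you need becomes $\phi\circ\upop_\pi = M_\pi\circ\phi$ for a \emph{monomial} (permutation-times-positive-diagonal) matrix $M_\pi$ rather than a coordinate permutation $\tau_\pi^*$, and then neither $\tau_\pi^*.Q'=Q'$ nor the coordinate-symmetry conclusion follows --- the paper's definition of coordinate-symmetric specifically requires $\upop_\pi\in\symGr{d}$, not a monomial matrix.

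The gap is repairable, but it requires an argument you have omitted. The barycenter $q_0$ of the polytope $Q$ lies in the relative interior of $Q$ and is fixed by every affine automorphism of $Q$; normalizing each inequality row so that $b^\le_i - A^\le_i q_0 = 1$ forces all the scalars $\mu_i^\pi$ to equal $1$. Even after this normalization, $(A^\le_i C_\pi, b^\le_i - A^\le_i c_\pi)$ and $(A^\le_{\tau_\pi(i)}, b^\le_{\tau_\pi(i)})$ may still differ by a combination of equality rows, so the identity $\phi\circ\upop_\pi=\tau_\pi^*\circ\phi$ holds only as an identity of affine maps on the affine hull of $Q$ --- which is all the argument uses, since $Q'$, $p'$, and the facets of $Q'$ are determined by $\phi$ restricted to that affine hull, but this restriction should be stated. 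Without spelling out the barycenter normalization (or an equivalent device), the key intertwining identity, and hence the proof, does not go through as written.
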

Thus, a lower bound on the number of variables in coordinate-symmetric subspace extensions of the given polytope~$P$ provides a lower bound on the size of symmetric extensions for~$P$ (note, that the size of a subspace extension is at most the dimension of the ambient space, since every subspace extension can be defined by a set of equations and a set of the non-negativity constraints). 

To prove lower bounds for subspace extensions one may use the following fact~\cite{KPT09}: if $Q$ is a subspace extension of the polytope $P$ with a section $s:X\rightarrow Q$ and $\scalProd{a}{x}\le b$ a valid inequality for $P$ then the system:
\begin{align}
	\sum_{x\in X}s_j(x)\lambda_x&\ge 0\quad\text{for all  }j\in\ints{d} \label{eqn_section-slack-covectors-extension}\\
	\sum_{x\in X}(b-\scalProd{a}{x})\lambda_x&<0\label{eqn_section-slack-covectors-polytope}
\end{align}
does not have a solution $\lambda\in\RR^X$.

A section~$s:X\rightarrow Q$ is \emph{coordinate-symmetric} (with respect to the action of~$G$ on~$X$) if for every~$\pi\in G$ there is a permutation~$\upop_{\pi}\in\symGr{d}$ with~$s(\pi.x)=\upop_{\pi}.s(x)$ for all~$x\in X$. It can be shown that every coordinate-symmetric extension admits a coordinate-symmetric section~\cite{KPT09}. 

For a coordinate-symmetric section~$s:X\rightarrow Q\subseteq\RR^d$ we can define an action of~$G$ on the set~$\mathcal{S}=\{s_1,\dots,s_d\}$ of the component functions of the section~$s:X\rightarrow Q$ via~$\pi.s_j=s_{\upop_{\pi^{-1}}^{-1}(j)}\in\mathcal{S}$
for each~$j\in\ints{d}$. Before showing that this definition yields a group action observe that for each~$j\in\ints{d}$ the following holds
\begin{equation}\label{eq:actionCompSect}
	(\pi.s_j)(x)=s_{\upop_{\pi^{-1}}^{-1}(j)}(x)=(\upop_{\pi^{-1}}.s(x))_j=s_j(\pi^{-1}.x)\quad\text{for all }x\in X\,.
\end{equation}
Now from~\eqref{eq:actionCompSect}, we deduce that~$(\pi\circ\sigma).s_j=\pi.(\sigma.s_j)$ for all~$\pi,\sigma\in G$ and~$\id.s_j=s_j$ for the identity element~$\id$ in~$G$, which shows that the action of the group $G$ on $\mathcal{S}$ is defined correctly.

The \emph{isotropy group} of~$s_j\in\mathcal{S}$ under this action is defined as the following subgroup of $G$
\begin{equation*}
	\isoGr{s_j}=\setDef{\pi\in G}{\pi.s_j=s_j}\,.
\end{equation*}
Thus an element~$\pi$ of $G$ is in the isotropy group~$\isoGr{s_j}$ if and only if ~$s_j(x)=s_j(\pi^{-1}.x)$ holds  for all~$x\in X$ (or equivalently, ~$s_j(\pi.x)=s_j(x)$  for all~$x\in X$). The \emph{orbit} corresponding to the component function~$s_i\in\mathcal{S}$ under the action of the isotropy group $\isoGr{s_j}$ is the following subset of $\mathcal{S}$
\begin{equation*}
    \setDef{\pi.s_i}{\pi\in \isoGr{s_j}}\,.
\end{equation*}

In general, it is impossible to determine the isotropy groups~$\isoGr{s_j}$ without more knowledge on the section~$s$. However, for each isotropy group~$\isoGr{s_j}$ it is possible to bound its index
\begin{equation*}
    (G:\isoGr{s_j})={\sabs{G}}/{\sabs{\isoGr{s_j}}}\,,
\end{equation*}
since the index is equal to the number of orbits under the action of $\isoGr{s_j}$ on $\mathcal{S}$, and thus is bounded from above  by the total number of variables, i.e.
\begin{equation*}
		(G:\isoGr{s_j})\le d\,.
\end{equation*}
To identify suitable subgroups of the isometry group~$\isoGr{s_j}$ one may use the above bound on the index together with the following result on subgroups of the symmetric group~$\symGr{n}$~\cite{Yan91}.

\begin{lemma}\label{lem:Yannakakis-claim}
	 For each subgroup~$U$ of~$\symGr{n}$ with~$(\symGr{n}:U)\le\binom{n}{k}$ for ~$k<\frac{n}{4}$, there is some ~$W\subseteq\ints{n}$ with~$|W|\le k$ such that 
	\begin{equation*}
		\setDef{\pi\in
		  \altGr{n}}{\pi(v)=v\text{ for all }v\in W}\subseteq U
	\end{equation*}
	holds.	
\end{lemma}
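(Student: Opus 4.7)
The plan is to argue by induction on $n$, reducing the intransitive case to a group of strictly smaller degree and handling the transitive case via a classical theorem on subgroup indices in $\symGr{n}$.

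First I would analyze the orbit structure of $U$ on $\ints{n}$. Let $O_1, \dots, O_r$ be the $U$-orbits, indexed so that $|O_1| \ge \cdots \ge |O_r|$. Since $U$ is contained in the setwise stabilizer of its orbit partition,
\begin{equation*}
(\symGr{n}:U) \;\ge\; \frac{n!}{|O_1|!\cdots|O_r|!} \;\ge\; \binom{n}{|O_1|}.
\end{equation*}
Combined with $(\symGr{n}:U) \le \binom{n}{k}$ and the unimodality of $j \mapsto \binom{n}{j}$, this forces $|O_1| \le k$ or $|O_1| \ge n-k$. The first alternative cannot occur: with all orbit sizes bounded by $k$, a short Stirling estimate using $k < n/4$ (which in particular gives $n > (e+1)k$) makes the full multinomial coefficient $n!/\prod |O_i|!$ strictly exceed $\binom{n}{k}$, contradicting the index bound. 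Hence $|O_1| \ge n-k$, and $W_1 := \ints{n}\setminus O_1$ satisfies $|W_1| \le k$.

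Next I would descend to the pointwise stabilizer $K := \{\pi \in U : \pi(v)=v \text{ for all }v \in W_1\}$, which is the kernel of the restriction homomorphism $U \to \symGr{W_1}$ and therefore sits inside $\symGr{O_1}$. The inequalities $|U|/|K| \le |W_1|!$ and $|U| \ge n!/\binom{n}{k}$, together with the classical identity $\binom{n}{k}\binom{k}{w} = \binom{n}{w}\binom{n-w}{k-w}$, yield
\begin{equation*}
(\symGr{O_1}:K) \;\le\; \binom{n}{k}\Big/\binom{n}{|W_1|} \;\le\; \binom{|O_1|}{k-|W_1|},
\end{equation*}
and $k - |W_1| < |O_1|/4$ follows from $4k < n$. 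When $|W_1| \ge 1$ the degree $|O_1|$ is strictly less than $n$, so the inductive hypothesis applied to $K \le \symGr{O_1}$ supplies a set $W_2 \subseteq O_1$ with $|W_2| \le k - |W_1|$ such that every element of $\altGr{O_1}$ fixing $W_2$ pointwise lies in $K$. Setting $W := W_1 \cup W_2$ gives $|W| \le k$; any $\pi \in \altGr{n}$ fixing $W$ pointwise restricts to an even permutation of $O_1$ fixing $W_2$ pointwise, hence lies in $K \subseteq U$, closing the induction.

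The \textbf{main obstacle} is the base of the recursion, namely the case $|W_1|=0$ in which $U$ is already transitive and we must conclude $\altGr{n} \subseteq U$ (i.e.\ that $W = \emptyset$ suffices). This is the classical dichotomy that any $U \le \symGr{n}$ with $(\symGr{n}:U) \le \binom{n}{k}$ and $k<n/4$ is either intransitive or contains $\altGr{n}$. Its proof splits further by primitivity: Bochert's theorem handles primitive $U$, giving $(\symGr{n}:U) \ge \lfloor(n+1)/2\rfloor!$ which vastly exceeds $\binom{n}{k}$; for an imprimitive transitive proper $U$ the embedding into a wreath product $\symGr{a}\wr\symGr{b}$ with $ab = n$ and $a,b \ge 2$ yields the lower bound $n!/(a!^b b!)$, which likewise exceeds $\binom{n}{k}$ under the same assumption on $k$. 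Combining these two subcases rules out every proper transitive $U$ not containing $\altGr{n}$, completing the transitive base and hence the lemma.
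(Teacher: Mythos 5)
The paper does not actually prove this lemma: it is cited from Yannakakis~\cite{Yan91} (his ``Claim'' in the section on symmetric LPs for matching), so there is no in-paper proof to compare against. Your reconstruction is nevertheless essentially the standard Yannakakis argument, and its overall structure is correct: bound $(\symGr{n}:U)$ from below by the multinomial index of the orbit-preserving Young subgroup, force a dominant orbit~$O_1$ of size at least $n-k$, pass to the pointwise stabiliser~$K$ of $W_1=\ints{n}\setminus O_1$, use $|U:K|\le|W_1|!$ together with $\binom{n}{k}\binom{k}{w}=\binom{n}{w}\binom{n-w}{k-w}$ to get $(\symGr{O_1}:K)\le\binom{|O_1|}{k-|W_1|}$, check $k-|W_1|<|O_1|/4$ (this reduces to $4k-3|W_1|<n$, which follows from $4k<n$), and recurse. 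The closing step correctly notes that an even permutation of $\ints{n}$ fixing $W_1\cup W_2$ pointwise restricts to an even permutation of $O_1$ fixing $W_2$, hence lands in $K\subseteq U$. The transitive base is handled by the usual dichotomy (Bochert's bound $\lfloor(n+1)/2\rfloor!$ for primitive groups, wreath-product order $(a!)^b\,b!$ for imprimitive ones), both of which comfortably exceed $\binom{n}{k}$ under $k<n/4$.

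Two places are stated more informally than they deserve, though neither is a gap in substance. First, ruling out $|O_1|\le k$: you invoke ``a short Stirling estimate.'' This can be made exact without any asymptotics. By log-convexity, $\prod_i|O_i|!\le(k!)^{q}\,r!$ with $q=\lfloor n/k\rfloor$, $r=n\bmod k$; then $(k!)^{q-1}r!=(n-k)!\big/\binom{n-k}{k,\dots,k,r}<(n-k)!$ as soon as $q\ge3$ (which $k<n/4$ guarantees), giving $n!/\prod_i|O_i|!>\binom{n}{k}$ directly. Second, for imprimitive $U$ you assert the wreath bound ``likewise exceeds $\binom{n}{k}$''; one should at least note that the worst factorisation $(a,b)=(n/2,2)$ gives index $\tfrac12\binom{n}{n/2}$, and that $\binom{n}{k}<\tfrac12\binom{n}{n/2}$ holds for $k<n/4$, $n\ge 8$ (the remaining small cases being finite checks). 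A small stylistic remark: working with the \emph{smallest} orbit $B$ instead of the complement of the largest would give $|B|\le n/2<n-k$ automatically and avoid the multinomial estimate entirely; that is closer to the cleanest form of the argument, though your version is also correct.

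Finally, the induction needs an explicit base: for $n\le4$ the hypothesis forces $k=0$, hence $U=\symGr{n}$ and $W=\varnothing$ works, and one should also note that $\altGr{n}$ is primitive for $n\ge5$ so that the imprimitive branch really does exclude subgroups containing $\altGr{n}$. These are trivial to supply, but you should say them; as written the recursion has no declared floor.
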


\section{Symmetric Subspace Extensions of Quadratic Size }
\label{sec:symextension}

The main result of this section is Theorem~\ref{thm:partition}, which describes the action of the group~$\altGr{n}$ on the components~$s_j$. Here~$\altGr{n}$ denotes the alternating group, i.e. the group consisting of all even permutations on the set~$\ints{n}$.  

Consider a polytope~$P\subseteq\RR^{n+m}$  for~$n\ge 9$, where the group~$\symGr{n}$ acts on the set~$X$ of vertices of~$P$ by permuting the first $n$~coordinates. Let a polytope~$Q\subseteq\RR^d$ with~$2d<n(n-1)$ be a  subspace extension of the polytope~$P$ with a coordinate-symmetric section. The mentioned coordinate-symmetric section~$s:X\rightarrow Q$ is defined with respect to the action of~$G=\symGr{n}$  on the vertex set~$X$. 

\begin{lemma}\label{lem:one_node}
For each~$j\in\ints{d}$ there is~$v_j\in \ints{n}$ such that
\begin{equation*}
 	\setDef{\pi\in\altGr{n}}{\pi(v_j)=v_j}\subseteq\isoGr{s_j}
\end{equation*}
This element~$v_j$ is uniquely determined unless~$\altGr{n} \subseteq \isoGr{s_j}$.
\end{lemma}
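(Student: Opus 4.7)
The plan is to apply Lemma~\ref{lem:Yannakakis-claim} to the isotropy group $\isoGr{s_j}$ with $k=2$. As recalled just before the statement, $(\symGr{n} : \isoGr{s_j}) \le d$, and combined with the hypotheses $2d < n(n-1)$ and $n \ge 9$ this yields $(\symGr{n} : \isoGr{s_j}) < \binom{n}{2}$ together with $2 < n/4$, so the lemma supplies a subset $W \subseteq \ints{n}$ with $|W| \le 2$ such that $A_W := \{\pi \in \altGr{n} : \pi|_W = \id\} \subseteq \isoGr{s_j}$. If $|W| \le 1$ we are done immediately: take $v_j$ to be the unique element of $W$, or any index at all when $W = \varnothing$, in which case $\altGr{n} \subseteq \isoGr{s_j}$. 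The real work lies in the case $|W| = 2$, say $W = \{v, w\}$.

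In that case, set $H := \isoGr{s_j} \cap \altGr{n}$. The natural map $gH \mapsto g\isoGr{s_j}$ is a well-defined injection $\altGr{n}/H \to \symGr{n}/\isoGr{s_j}$, so $(\altGr{n} : H) \le d < n(n-1)/2$, and therefore $|H| > (n-2)! = 2\,|A_{v,w}|$; in particular, $H$ strictly contains $A_{v,w}$. For $n \ge 6$ the subgroup $A_{v,w} \cong \altGr{n-2}$ is maximal in each of the point stabilizers $A_v, A_w \cong \altGr{n-1}$ (the action of $\altGr{n-1}$ on $n-1$ points is $2$-transitive, hence primitive). Consequently, whenever $H \subseteq A_v$ maximality forces $H = A_v$, and the choice $v_j = v$ works; the case $H \subseteq A_w$ is symmetric.

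The hard part will be the remaining case, in which $H$ fixes neither $v$ nor $w$ pointwise. Here we analyze the orbit structure of $H$ on $\ints{n}$: since $A_{v,w} \le H$ and the $A_{v,w}$-orbits are $\{v\}, \{w\}, \ints{n} \setminus \{v, w\}$, the $H$-orbits can only coarsen these, leaving the two possibilities that either $H$ stabilizes the partition $\{\{v, w\}, \ints{n} \setminus \{v, w\}\}$, or $H$ is transitive on $\ints{n}$. The first possibility places $H$ inside the setwise stabilizer of $\{v, w\}$ in $\altGr{n}$, a subgroup of index $\binom{n}{2}$, contradicting $(\altGr{n}:H) < \binom{n}{2}$. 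In the transitive case $A_{v,w} \le H_v \le A_v$, so maximality gives $H_v = A_v$ (whence $|H| = n|A_v| = |\altGr{n}|$ and $H = \altGr{n}$) or $H_v = A_{v,w}$ (whence $(\altGr{n}:H) = n-1$); the latter subcase is ruled out by the classical fact that for $n \ge 5$ the simple group $\altGr{n}$ has no proper subgroup of index less than $n$, since otherwise the action on cosets would embed $\altGr{n}$ into $\symGr{n-1}$, impossible by cardinality. So in every sub-case of the remaining case we obtain $H = \altGr{n}$, and any $v_j$ suffices.

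Uniqueness should follow quickly: if $A_{v_j}, A_{v'_j} \subseteq \isoGr{s_j}$ with $v_j \ne v'_j$, then since for $n \ge 5$ the point stabilizer $A_{v_j} \cong \altGr{n-1}$ is maximal in $\altGr{n}$, and since $A_{v'_j}$ has the same order as $A_{v_j}$ but a different (unique) fixed point in $\ints{n}$ and so cannot be contained in $A_{v_j}$, the subgroup $\langle A_{v_j}, A_{v'_j}\rangle$ properly contains $A_{v_j}$ and hence equals $\altGr{n}$. Therefore $\altGr{n} \subseteq \isoGr{s_j}$, as claimed.
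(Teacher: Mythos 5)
Your proof is correct, and it takes a genuinely different route from the paper's. The paper's argument is global: starting from a hypothetical $V_j=\{u,w\}$ for which $A_{u,w}\subseteq\isoGr{s_j}$ but neither $A_u$ nor $A_w$ is contained in $\isoGr{s_j}$, it uses the $\altGr{n}$-action on the component functions $\mathcal{S}$ together with 2-transitivity of $\altGr{n}$ to produce, for \emph{every} unordered pair $\{u',w'\}\subseteq\ints{n}$, some component $s_{j'}$ with the same "bad" property; since $|\mathcal{S}|=d<\binom{n}{2}$, pigeonhole forces one $s_{j'}$ to witness two distinct pairs, and the observation that $A_{u',w'}$ and $A_{u',w''}$ generate $A_{u'}$ (resp.\ that two disjoint such groups generate $\altGr{n}$) yields the contradiction. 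You instead stay entirely inside a single isotropy group: you set $H=\isoGr{s_j}\cap\altGr{n}$, transfer the index bound to get $(\altGr{n}:H)<\binom{n}{2}$ and hence $A_{v,w}\lneq H$, and then classify $H$ via the orbit decomposition of $\ints{n}$, maximality of $A_{v,w}$ in $A_v$ and $A_w$, the index $\binom{n}{2}$ of the setwise stabilizer of $\{v,w\}$ in $\altGr{n}$, and the minimum-index fact for the simple group $\altGr{n}$, concluding $H\in\{A_v,A_w,\altGr{n}\}$. Your approach uses a bit more group-theoretic machinery (primitivity/maximality of point stabilizers, simplicity, no small-index subgroups) but is local and avoids both the pigeonhole over $\mathcal{S}$ and the auxiliary group action on component functions; the paper's approach is more elementary per step but needs the global count of components to close. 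Both handle uniqueness via maximality of point stabilizers in $\altGr{n}$, and the hypotheses $n\ge 9$ and $2d<n(n-1)$ are used identically to invoke Lemma~\ref{lem:Yannakakis-claim} with $k=2$.
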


\begin{proof}
As we assumed~$d<\binom{n}{2}$ and thus Lemma~\ref{lem:Yannakakis-claim} implies that for all~$j\in\ints{d}$
\begin{equation*}
 	\setDef{\pi\in\altGr{n}}{\pi(v)=v\text{ for all }v\in V_j}\subseteq\isoGr{s_j}
\end{equation*}
 for some set~$V_j\subset \ints{n}$,~$|V_j|\le 2$. Thus, it has to be proven that~$V_j$ can be chosen to contain not more than one element, which we later denote by~$v_j$.

Let us assume that the set~$V_{j}$ consists of two elements~$\{u,w\}$ such that
\begin{multline*}
\setDef{\pi\in\altGr{n}}{\pi(v)=v\text{ for all }v\in \{u,w\}}\subseteq\isoGr{s_j}\qquad\text{and}\\
 	\setDef{\pi\in\altGr{n}}{\pi(u)=u}\not \subseteq\isoGr{s_j}\qquad\text{and}\\ \setDef{\pi\in\altGr{n}}{\pi(w)=w}\not \subseteq\isoGr{s_j}\,.
\end{multline*}

Due to~\eqref{eq:actionCompSect}, for every $\pi$, $\sigma\in\altGr{n}$ and for every $x\in X$ the following holds
\begin{equation*}
	(\sigma.s_j)(\pi.x)=s_j((\sigma^{-1}\circ\pi).x)\,.
\end{equation*}
Note that $s_j(\sigma^{-1}.x)=s_j(\sigma^{-1}\circ\pi.x)$ holds for all $x\in X$ if and only if $s_j(x)=s_j(\sigma^{-1}\circ\pi\circ\sigma.x)$ holds for all $x\in X$ since $\sigma$ defines an automorphism on $X$. Thus, we obtain the following
\begin{multline*}
 	\setDef{\pi\in\altGr{n}}{\pi(\sigma(v))=\sigma(v)\text{ for all }v\in \{u,w\}}\subseteq\isoGr{\sigma.s_j}\qquad\text{and}\\ 
	\setDef{\pi\in\altGr{n}}{\pi(\sigma(u))=\sigma(u)}\not \subseteq\isoGr{\sigma.s_j}\qquad\text{and}\\ \setDef{\pi\in\altGr{n}}{\pi(\sigma(w))=\sigma(w)}\not \subseteq\isoGr{\sigma.s_j}\,.
\end{multline*}

This shows that for every two element $u'$, $w'\in\ints{n}$ there is a coordinate function $s_{j'}\in\mathcal{S}$ such that
\begin{multline}\label{eq:fixingPairs}
 	\setDef{\pi\in\altGr{n}}{\pi(v)=v\text{ for all }v\in \{u',w'\}}\subseteq\isoGr{s_{j'}}\qquad\text{and}\\ 
	\setDef{\pi\in\altGr{n}}{\pi(u')=u'}\not \subseteq\isoGr{s_{j'}}\qquad\text{and}\\ \setDef{\pi\in\altGr{n}}{\pi(w')=w'}\not \subseteq\isoGr{s_{j'}}\,,
\end{multline}
since the alternating group $\altGr{n}$ is $2$-transitive, i.e. for every (possibly involving a common element) two pairs of elements in $\ints{n}$ there exists a  permutation in $\altGr{n}$ which maps the first pair on the second pair.

Since the number of different component functions in $\mathcal{S}$ is smaller then $\binom{n}{2}$ there exists a component function $s_{j'}$, which satisfies~\eqref{eq:fixingPairs} for two different pairs $(u',w')$ and $(u'',w'')$ of elements in~$\ints{n}$. Let us consider two different cases: these pairs have one element in common or these pairs are disjoint. 

Let us consider the first case and let $u'$ be equal $u''$. We obtain a contradiction to the statement
\begin{equation*}
    \setDef{\pi\in\altGr{n}}{\pi(u')=u'}\not \subseteq\isoGr{s_{j'}}\,,
\end{equation*}
since it is not hard to see that for every two distinct elements $w'$ and $w''$ the elements in two following subgroups of $\isoGr{s_{j'}}$
\begin{multline*}
    \setDef{\pi\in\altGr{n}}{\pi(v)=v\text{ for all }v\in \{u',w'\}}\quad\text{and}\\\setDef{\pi\in\altGr{n}}{\pi(v)=v\text{ for all }v\in \{u',w''\}}
\end{multline*}
together generate the group $\setDef{\pi\in\altGr{n}}{\pi(u')=u'}$.

In the second case, it is straightforward to show that for two disjoint pairs $(u',w')$ and $(u'', w'')$ the groups 
\begin{multline*}
    \setDef{\pi\in\altGr{n}}{\pi(v)=v\text{ for all }v\in \{u',w'\}}\quad\text{and}\\\setDef{\pi\in\altGr{n}}{\pi(v)=v\text{ for all }v\in \{u'',w''\}}
\end{multline*}
together generate the alternating group $\altGr{n}$, which contradicts~\eqref{eq:fixingPairs}. The same argumentation shows that $v_j$ is uniquely determined unless the isotropy group contains all even permutations.
\end{proof}

\begin{theorem}\label{thm:partition}
 	There exists a partition of the set~$\ints{d}$ into sets~$\mathcal{A}_1$,\ldots,$\mathcal{A}_\iota$ and~$\mathcal{B}$, such that each set~$\mathcal{A}_i$ consists of~$n$ elements~$a^i_1$,~$a^i_2$,\ldots,$a^i_n$ satisfying
	\begin{equation}\label{eq:partition}
	 	s_{a^i_t}(\pi.x)=s_{a^i_{\pi^{-1}(t)}}(x) \qquad\text{and}\qquad
	 	s_{b}(\pi.x)=s_{b}(x)
	\end{equation}
	for every ~$x\in X$, $b\in\mathcal{B}$, $\pi\in \altGr{n}$, $i\in\ints{\iota}$, $t\in\ints{n}$. 
\end{theorem}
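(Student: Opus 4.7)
The plan is to use Lemma~\ref{lem:one_node} to attach a label $v_j\in\ints{n}$ to every index $j$ outside a distinguished set $\mathcal{B}$, and then to show that this labelling is equivariant under the $\altGr{n}$-action on component functions, so that the orbits of the action supply the required blocks $\mathcal{A}_i$. First, set $\mathcal{B}:=\{b\in\ints{d}:\altGr{n}\subseteq\isoGr{s_b}\}$; for $b\in\mathcal{B}$ the identity $s_b(\pi.x)=s_b(x)$ with $\pi\in\altGr{n}$ is immediate from~\eqref{eq:actionCompSect}. For each $j\in\ints{d}\setminus\mathcal{B}$, Lemma~\ref{lem:one_node} supplies the uniquely determined $v_j\in\ints{n}$.

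The next step is to establish equivariance: whenever $\pi\in\symGr{n}$ satisfies $\pi.s_j=s_{j'}$, then $v_{j'}=\pi(v_j)$. A direct computation with the definition of the action on $\mathcal{S}$ yields $\isoGr{s_{j'}}=\pi\,\isoGr{s_j}\,\pi^{-1}$, and since $\altGr{n}$ is normal in $\symGr{n}$, this also gives $j\in\mathcal{B}\iff j'\in\mathcal{B}$. For $j\notin\mathcal{B}$ and any $\tau\in\altGr{n}$ fixing $\pi(v_j)$, the conjugate $\pi^{-1}\tau\pi\in\altGr{n}$ fixes $v_j$ and hence lies in $\isoGr{s_j}$ by Lemma~\ref{lem:one_node}; this yields $\tau\in\isoGr{s_{j'}}$, whereupon the uniqueness clause in Lemma~\ref{lem:one_node} forces $v_{j'}=\pi(v_j)$.

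With equivariance in hand, each $\altGr{n}$-orbit in $\mathcal{S}\setminus\{s_b:b\in\mathcal{B}\}$ has exactly $n$ elements, bijecting to $\ints{n}$ via $f\mapsto v_f$: the function-stabilizer contains the point stabilizer of $v_j$ in $\altGr{n}$ (of index $n$), and any strict extension would be a proper subgroup of $\altGr{n}$ of index strictly less than $n$; since $n\ge9$, the simplicity of $\altGr{n}$ rules this out, so a strict extension must be all of $\altGr{n}$, contradicting $j\notin\mathcal{B}$. Lifting back to indices, each function in such an orbit is realized by a common number $m$ of indices (multiplicities are constant on orbits by the permutation action on coordinates), so I would split the $n\cdot m$ indices involved into $m$ blocks $\mathcal{A}_i$, each containing exactly one index per value of $v$. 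Labelling $a^i_t$ so that $v_{a^i_t}=t$, the identity~\eqref{eq:partition} rewrites via~\eqref{eq:actionCompSect} as $\pi.s_{a^i_{\pi^{-1}(t)}}=s_{a^i_t}$; both sides are functions in the orbit carrying $v$-value $t$, which by the bijection is a single function, so the identity holds.

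The principal obstacle is the equivariance step, where Lemma~\ref{lem:one_node}'s uniqueness must be coordinated with the conjugation $\isoGr{s_{j'}}=\pi\,\isoGr{s_j}\,\pi^{-1}$ to propagate the label consistently; a subsidiary technicality — that the orbits have size exactly $n$ — is dispatched via the simplicity of $\altGr{n}$, and a small bookkeeping point is the lifting from function-orbits to a partition of $\ints{d}$ in the presence of possible repeated coordinates.
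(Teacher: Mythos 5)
Your proposal is correct and follows essentially the same route as the paper: put into $\mathcal{B}$ the indices whose component functions are $\altGr{n}$-invariant, label each remaining index with the unique $v_j$ from Lemma~\ref{lem:one_node}, prove the equivariance $v_{\pi.s_j}=\pi(v_j)$ by conjugating isotropy groups, and assemble the $\mathcal{A}_i$-blocks from the $\altGr{n}$-orbits of component functions, one index per label. The simplicity argument for orbit size $n$ is redundant (equivariance already forces the function-stabilizer to coincide with the point stabilizer of $v_j$, so the orbit-stabilizer theorem suffices) and the explicit multiplicity bookkeeping is a useful unpacking of a point the paper treats tersely, but neither changes the underlying approach.
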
 
\begin{proof}
 Let us consider the orbit of a component function $s_j\in\mathcal{S}$ under the action of the alternating group $\altGr{n}$. There are two possible cases $\altGr{n}\subseteq\isoGr{s_j}$ and $\altGr{n}\not\subseteq\isoGr{s_j}$. In the first case, the component function $s_j$ is associated with the set $\mathcal{B}$.

In the second case, due to~\eqref{eq:actionCompSect} for every $\pi$, $\sigma\in\altGr{n}$ and for every $x\in X$ the following holds
\begin{equation*}
	(\sigma.s_j)(\pi.x)=s_j((\sigma^{-1}\circ\pi).x)\,.
\end{equation*}
Note that $s_j(\sigma^{-1}.x)=s_j(\sigma^{-1}\circ\pi.x)$ holds for all $x\in X$ if and only if $s_j(x)=s_j(\sigma^{-1}\circ\pi\circ\sigma.x)$ holds for all $x\in X$ since $\sigma$ defines an automorphism on $X$. 

Now let us use Lemma~\ref{lem:one_node} to show that for every $\sigma$ and $\phi$ in $\altGr{n}$ the component functions $\sigma.s_{j}$ and $\phi.s_j$ are identical whenever $\sigma(v_j)$ equals $\phi(v_j)$. Indeed, for every $x\in X$ the following holds
\begin{multline*}
 (\sigma.s_j)(x)=s_j((\sigma^{-1}).x)=s_j(\phi^{-1}\circ\sigma.(\sigma^{-1}.x))=\\s_j(\phi^{-1}\circ\sigma\circ\sigma^{-1}.x))=s_j(\phi^{-1}.x)=(\phi.s_j)(x)\,,
\end{multline*}
the second equality holds because the permutation $\phi^{-1}\circ\sigma$ is even and maps $v_j$ on itself, and thus lies in $\isoGr{s_j}$.

Moreover, for every $\sigma$ in $\altGr{n}$  the element $v_{j'}$(here  $s_{j'}=\sigma.s_j$) is uniquely defined and equals $\sigma(v_j)$. This follows in a straightforward manner from the fact that $(\sigma.s_j)(\pi.x)$ equals $(\sigma.s_j)(x)$ for every $x\in X$ if and only if $s_j((\sigma^{-1}\circ\pi\circ\sigma).x)$ equals $s_j(x)$ for every $x\in X$. 

Now, it is easy to see that it is enough to associate the coordinate functions $\pi.s_{j}$, $\pi\in\altGr{n}$ in the same orbit of $s_j$ to their elements $\pi(v_j)$ to finish the proof of the theorem.
\end{proof}

Let us establish the following theorem using Theorem~\ref{thm:partition} proved above.

\begin{theorem}\label{thm:partition_int}
 	If $X\subseteq{\{0,1\}}^n\times\RR^m$ there exists a partition of the set~$\ints{d}$ into sets~$\mathcal{A}_1$,\ldots,$\mathcal{A}_\iota$ and~$\mathcal{B}$, such that each set~$\mathcal{A}_i$ consists of~$n$ elements~$a^i_1$,~$a^i_2$,\ldots,$a^i_n$ satisfying
	\begin{align}
	 	&s_{a^i_v}(x)=s_{a^i_w}(x)&&\text{ if } x_v=x_w   \label{eq:partition_int}\\
	 	&s_{b}(x)=s_{b}(y)&&\text{ if } x=\pi.y\quad \text{ for some }\; \pi\in \altGr{n}
	\end{align}
	for every $x$, $y\in X$, $b\in\mathcal{B}$, $i\in\ints{\iota}$, $t\in\ints{n}$. 
\end{theorem}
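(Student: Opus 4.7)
\medskip

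The plan is to reuse the partition from Theorem~\ref{thm:partition} without modification and to derive the stronger conclusions from the binary nature of the first $n$ coordinates. The statement about $b \in \mathcal{B}$ is essentially immediate: Theorem~\ref{thm:partition} gives $s_b(\pi.y) = s_b(y)$ for all $\pi \in \altGr{n}$ and $y \in X$, so whenever $x = \pi.y$ for some $\pi \in \altGr{n}$ we obtain $s_b(x) = s_b(\pi.y) = s_b(y)$. So the work concentrates on~\eqref{eq:partition_int}.

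For~\eqref{eq:partition_int}, fix $i \in \ints{\iota}$ and $x \in X$ with $x_v = x_w$ (we may assume $v \neq w$, else the claim is trivial). The idea is to exhibit a permutation $\pi \in \altGr{n}$ such that $\pi.x = x$ (i.e., $\pi$ fixes the first $n$ coordinates of $x$) and $\pi^{-1}(v) = w$; then Theorem~\ref{thm:partition} yields
\begin{equation*}
 s_{a^i_v}(x) = s_{a^i_v}(\pi.x) = s_{a^i_{\pi^{-1}(v)}}(x) = s_{a^i_w}(x)\,.
\end{equation*}
The natural candidate is the double transposition $\pi = (v\,w)(a\,b)$ for some auxiliary indices $a, b \in \ints{n} \setminus \{v, w\}$ with $x_a = x_b$. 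This $\pi$ is an element of $\altGr{n}$, it maps $v$ to $w$ and vice versa, and it fixes the binary vector $(x_1,\dots,x_n)$ provided $x_a = x_b$ (we already have $x_v = x_w$ by assumption).

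It remains to locate such a pair $a, b$, which is the only delicate point, and it is a short pigeonhole argument. After removing $v$ and $w$, there are $n - 2 \geq 7$ coordinates (since $n \geq 9$), each lying in $\{0,1\}$; by pigeonhole, at least $\lceil (n-2)/2 \rceil \geq 4$ of them share a common value, so we can pick two distinct indices $a, b$ in $\ints{n} \setminus \{v, w\}$ with $x_a = x_b$. This supplies the required $\pi$ and completes the proof. No new structural facts about the section are needed beyond those already established in Theorem~\ref{thm:partition}; the argument merely exploits that binary equality of two entries suffices to build an even permutation that fixes $x$ while transposing those entries.
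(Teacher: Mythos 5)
Your proof is correct and takes essentially the same approach as the paper: find an even permutation stabilizing the vertex $x$ that transposes $v$ and $w$, and then apply Theorem~\ref{thm:partition}. The paper asserts the existence of such a permutation in $\isoGr{x}\cap\altGr{n}$ without further justification, and your explicit double-transposition construction $(v\,w)(a\,b)$ combined with the pigeonhole choice of $a,b$ among the remaining $n-2\ge 7$ binary coordinates supplies exactly that missing detail.
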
 
\begin{proof}
 Due to the definition of the section~$s$, for every component function $s_j\in\mathcal{S}$, permutation $\pi\in\altGr{n}$ and vertex $x\in X$ the value $s_j(x)$ equals $s_j(\pi.x)$, whenever $\pi$ lies  in the isotropy group $\isoGr{x}$. Moreover, for every two elements $v$ and $w$ with $x_v=x_w$ there exists a permutation $\pi$ in $\isoGr{x}\cap\altGr{n}$ with $\pi(v)=w$. Thus, the statement~\eqref{eq:partition_int} follows directly from Theorem~\ref{thm:partition}.
\end{proof}

\section{Permutahedron}
\label{sec:symextension_perm}

Now we would like to establish a lower bound on the
number of variables in symmetric subspace extensions of the permutahedron.
\begin{theorem}\label{symext:thm_lowerbound-perm}
	For every~$n\ge9$ there exists no symmetric extension of the permutahedron~$\Pi_{n}$ of size less than~$\frac{n(n-1)}{2}$ with respect to the group~$G=\symGr{n}$. 
\end{theorem}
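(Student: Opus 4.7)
My plan is by contradiction. By Lemma~\ref{thm:transform_eq}, it suffices to rule out the existence of a coordinate-symmetric subspace extension $Q\subseteq\RR^d$ of $\Pi_n$ (with respect to $G=\symGr{n}$) admitting a coordinate-symmetric section $s\colon X\to Q$ with $d<n(n-1)/2$. Since $2d<n(n-1)$ and $n\ge 9$, Theorem~\ref{thm:partition} supplies a partition $\ints{d}=\mathcal{A}_1\sqcup\dots\sqcup\mathcal{A}_\iota\sqcup\mathcal{B}$ with $|\mathcal{A}_i|=n$, so $d=n\iota+|\mathcal{B}|$; the contradiction will be extracted from this partition together with the slack-covector framework.

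Identify each vertex of $\Pi_n$ as $y^\sigma=\sigma.(1,2,\ldots,n)$ for a unique $\sigma\in\symGr{n}$, and write $\epsilon(\sigma)\in\{E,O\}$ for its parity. Because $\altGr{n}$ is $(n-2)$-transitive for $n\ge 9$, the stabilizer $\mathrm{Stab}_{\altGr{n}}(v)\cong\altGr{n-1}$ acts on the $(n-1)!$ vertices $\{y^\sigma:\sigma^{-1}(v)=k\}$ with exactly two orbits. Combined with the equivariance provided by~\eqref{eq:partition}, this forces
\[
  s_{a^i_v}(y^\sigma)=h^{\epsilon(\sigma)}_i\bigl(\sigma^{-1}(v)\bigr),\qquad
  s_b(y^\sigma)=s^{\epsilon(\sigma)}_b,
\]
for non-negative functions $h^\epsilon_i\colon\ints{n}\to\RR_{\ge 0}$ and constants $s^\epsilon_b\ge 0$. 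The coordinate-symmetry of the linear projection $p$, together with the $2$-transitivity of $\altGr{n}$ on pairs of distinct elements of $\ints{n}$, rigidifies $p$ to the form $x_v=\sum_i c_i\,s_{a^i_v}(x)+C(x)$, where $C(x)$ is an $\altGr{n}$-invariant combination of the $S_i(x)=\sum_v s_{a^i_v}(x)$ and the $s_b(x)$. Evaluated at $y^\sigma$ this produces the scalar identities $\sum_i c_i h^\epsilon_i(k)=k-C^\epsilon$ for every $k\in\ints{n}$ and every parity $\epsilon\in\{E,O\}$.

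Next, for each facet $F_S\colon\sum_{v\in S}x_v\ge|S|(|S|+1)/2$ of $\Pi_n$, the slack-covector lemma (see~\eqref{eqn_section-slack-covectors-extension}-\eqref{eqn_section-slack-covectors-polytope}) gives a non-negative expression of the slack in terms of the $s_j$. Averaging the coefficients over $\mathrm{Stab}_{\altGr{n}}(F_S)$, which is transitive on $k$-subsets ($k=|S|$), collapses the structure to scalar compatibility relations $p^k_i-q^k_i=c_i+\gamma^k$ and
\[
  k(C^\epsilon-\gamma^k)-\tfrac{k(k+1)}{2}\;=\;\sum_i q^k_i T^\epsilon_i+\sum_{b\in\mathcal{B}} r^k_b s^\epsilon_b,
\]
where $p^k_i,q^k_i,r^k_b\ge 0$, $T^\epsilon_i=\sum_k h^\epsilon_i(k)$, and $\gamma^k$ ranges over the scalars for which $\gamma^k\mathbf{1}$ lies in the linear span of $h^\epsilon_1,\dots,h^\epsilon_\iota$ (a one-dimensional family exactly when those profiles are linearly dependent with $\mathbf{1}$, as happens e.g.\ in the Birkhoff extension).

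The main obstacle, and the decisive step, is to combine these scalar relations with the non-negativity $h^\epsilon_i\ge 0$ to derive a contradiction when $\iota<(n-1)/2$. The key estimate is that for each $k$ the constraint $q^k_i\ge(-c_i-\gamma^k)^{+}$ (needed for $p^k_i\ge 0$) forces $\sum_i(-c_i-\gamma^k)^{+}T^\epsilon_i$ to be bounded above by $k(C^\epsilon-\gamma^k)-k(k+1)/2$, while simultaneously the $h^\epsilon_i$'s must realize the linear profile $k\mapsto k-C^\epsilon$ via at most $\iota$ non-negative combinands; as $k$ ranges over $\ints{n-1}$ these constraints become jointly infeasible once $\iota<(n-1)/2$. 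I expect the cleanest finish to package this via~\eqref{eqn_section-slack-covectors-extension}-\eqref{eqn_section-slack-covectors-polytope} applied to a small-$|S|$ facet together with a covector $\lambda\in\RR^X$ supported on vertex pairs built from products of disjoint transpositions inside a single $\altGr{n}$-orbit of $X$: the functional form derived above ensures $\sum_x s_j(x)\lambda_x\ge 0$ for every $j$, while a direct evaluation yields $\sum_x(b-\scalProd{a}{x})\lambda_x<0$ for the chosen facet, contradicting the no-solution property of the slack-covector system.
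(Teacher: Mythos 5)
Your proposal follows the paper's overall scaffolding — reduce via Lemma~\ref{thm:transform_eq} to a coordinate-symmetric subspace extension, invoke Theorem~\ref{thm:partition} to obtain the partition $\ints{d}=\mathcal{A}_1\sqcup\dots\sqcup\mathcal{A}_\iota\sqcup\mathcal{B}$, derive the functional form $s_{a^i_v}(\Lambda(\sigma))=h^{\epsilon(\sigma)}_i(\sigma^{-1}(v))$ and $s_b(\Lambda(\sigma))=s^{\epsilon(\sigma)}_b$, and then look for a slack covector $\lambda$ that satisfies~\eqref{eqn_section-slack-covectors-extension} and violates~\eqref{eqn_section-slack-covectors-polytope}. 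The structural observation about $h^\epsilon_i$ is correct and consistent with what the paper implicitly uses. However, there are two genuine gaps in the argument.

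First, the claim that the coordinate-symmetry of $p$ together with the $2$-transitivity of $\altGr{n}$ ``rigidifies'' $p$ to the form $x_v=\sum_i c_i\,s_{a^i_v}(x)+C(x)$ is not justified and is not actually needed for the slack-covector approach: $p$ is an arbitrary affine map with $p(Q)=P$ and need not be equivariant on all of $\RR^d$ in the specific coordinatized way you describe, and the scalar relations $\sum_i c_i h^\epsilon_i(k)=k-C^\epsilon$ you deduce from it are therefore unsupported. Second, and more seriously, the actual contradiction is never derived. The ``key estimate'' — that the compatibility relations become jointly infeasible once $\iota<(n-1)/2$ — is asserted, not proven, and the concluding paragraph explicitly says only that you ``expect'' a clean finish via a covector supported on products of disjoint transpositions, without constructing it or verifying either side of the slack-covector inequalities. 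That is precisely the hard part, and it is the part the paper actually does: it first proves (Lemma~\ref{symext:lem-exist-element}) by a pigeonhole count, using $\iota<(n-1)/2$ to exclude at most $2\iota\le n-2$ of the $n-1$ candidate positions, that some $w\in\ints{n-1}$ makes the relevant sums of $s_{a^i_v}(\Lambda(\id))$ strictly positive whenever $s_{a^i_w}$ or $s_{a^i_{w+1}}$ is positive at the identity; it then takes $\lambda$ supported not on transposition-pairs but on the subgroup $G_w=\setDef{\pi\in\altGr{n}}{\pi(\ints{w})=\ints{w}}$ (weight $1$) and the coset $G_w\tau$ for a $3$-cycle $\tau$ (weight $-\epsilon$), and verifies~\eqref{eqn_section-slack-covectors-extension} for small enough $\epsilon>0$ using exactly the positivity from Lemma~\ref{symext:lem-exist-element}, while~\eqref{eqn_section-slack-covectors-polytope} is violated for the facet $\sum_{v\in\ints{w}}x_v\ge w(w+1)/2$. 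You would need to supply a concrete analogue of Lemma~\ref{symext:lem-exist-element} and an explicit covector construction with verifications to close your argument.
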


Let us introduce the operator~$\Lambda(\zeta)$, which maps every permutation~$\zeta\in\symGr{n}$ to the vector ($\zeta^{-1}(1)$, $\zeta^{-1}(2)$, $\ldots$, $\zeta^{-1}(n)$). Thus, we have 
\begin{equation*}
	X=\setDef{\Lambda(\zeta)}{\zeta\in\symGr{n}}
\end{equation*} and
\begin{equation*}
	(\pi.\Lambda(\zeta))_v=\Lambda(\zeta)_{\pi^{-1}(v)}
\end{equation*}
for all~$\pi\in \symGr{n}$,~$\zeta\in\symGr{n}$,~$v\in\ints{n}$.

Let us assume the contrary, i.e. that there exists a symmetric subspace extension of the permutahedron of size less than~$\frac{n(n-1)}{2}$.  From Theorem~\ref{thm:partition} we have some understanding of how permutations in~$\altGr{n}$ act on the component functions of the section~$s$, which can be used to prove Theorem~\ref{symext:thm_lowerbound-perm}.

\begin{lemma}\label{symext:lem-exist-element}
 There exists~$w\in\ints{n-1}$ such that
\begin{equation}\label{symext:eqn_end}
	\text{if} \quad s_{a^i_w}(\Lambda(id_{n}))>0\quad \text{then} \quad \sum_{v>w} s_{a^i_v}(\Lambda(id_{n}))>0
\end{equation}
and
\begin{equation}\label{symext:eqn_start}
	\text{if} \quad s_{a^i_{w+1}}(\Lambda(id_{n}))>0 \quad\text{then}\quad \sum_{v\le w} s_{a^i_v}(\Lambda(id_{n}))>0
\end{equation}
hold for all $i\in\ints{\iota}$.
\end{lemma}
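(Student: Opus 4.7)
The plan is combinatorial. For each orbit $\mathcal{A}_i$ I will count how many values of $w\in\ints{n-1}$ can be ``bad'' in the sense that (\ref{symext:eqn_end}) or (\ref{symext:eqn_start}) fails for that orbit, and then show that the total number of bad values is strictly less than $n-1$, so at least one good $w$ must remain.

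First, for each $i\in\ints{\iota}$ I would introduce the support of the $i$-th orbit at the identity vertex,
\begin{equation*}
	T_i:=\setDef{v\in\ints{n}}{s_{a^i_v}(\Lambda(\id_{n}))>0}\,.
\end{equation*}
Because $Q$ is a subspace extension of $\Pi_n$, we have $Q\subseteq\RR_+^d$, so every component function is non-negative on $X$. Consequently the partial sum $\sum_{v>w}s_{a^i_v}(\Lambda(\id_{n}))$ vanishes precisely when $T_i\cap\{w+1,\dots,n\}=\varnothing$, and analogously for the partial sum on the other side. This yields the clean characterization: condition (\ref{symext:eqn_end}) fails for orbit $i$ at value $w$ if and only if $\max T_i = w$, and condition (\ref{symext:eqn_start}) fails if and only if $\min T_i = w+1$. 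Each orbit therefore contributes at most two bad values to $\ints{n-1}$.

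Next I would bound $\iota$. Since the sets $\mathcal{A}_1,\dots,\mathcal{A}_\iota$ from Theorem~\ref{thm:partition} are pairwise disjoint subsets of $\ints{d}$ of cardinality~$n$, we have $n\iota\le d$. Combined with the standing assumption $d<\frac{n(n-1)}{2}$ of Section~\ref{sec:symextension_perm}, this gives
\begin{equation*}
	2\iota\le\frac{2d}{n}<n-1\,.
\end{equation*}
Hence the set of bad values contained in $\ints{n-1}$ has cardinality at most $2\iota<n-1$, so at least one $w\in\ints{n-1}$ avoids every bad value and therefore satisfies (\ref{symext:eqn_end}) and (\ref{symext:eqn_start}) for all $i\in\ints{\iota}$ simultaneously.

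The only conceptual step is the translation of the two inequalities into the structural conditions $\max T_i=w$ and $\min T_i=w+1$; once this is in place---and it relies crucially on the non-negativity of the section components on $X$---the remainder is a short pigeonhole argument powered by the quadratic size bound $2d<n(n-1)$.
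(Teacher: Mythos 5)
Your proof is correct and follows essentially the same route as the paper: bound $\iota<\frac{n-1}{2}$ from $n\iota\le d<\frac{n(n-1)}{2}$, observe by non-negativity of the section components that each orbit $\mathcal{A}_i$ rules out at most one $w$ for (\ref{symext:eqn_end}) (namely $\max T_i$, when it lies in $\ints{n-1}$) and at most one for (\ref{symext:eqn_start}) (namely $\min T_i-1$), and conclude by pigeonhole. The only difference is cosmetic: you introduce the support set $T_i$ and give a precise ``fails iff'' characterization of the bad values, whereas the paper just asserts an upper bound of one bad value per condition per orbit via the same maximal/minimal-support-element observation.
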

\begin{proof}

 Since each set~$\mathcal{A}_i$ consists of~$n$ components, we can conclude that~$\iota$ is less than~$\frac{n-1}{2}$     (recall~$d<\frac{n(n-1)}{2}$).

There can exist just one element~$u$ from~$\ints{n-1}$, which violates the statement (\ref{symext:eqn_end}) for a fixed index~$i\in\ints{\iota}$ (it can be only the maximal element from~$\ints{n-1}$ for which~$s_{a^i_u}(\Lambda(id_{n}))>0$, since the component functions take non-negative values). Analogously, for each~$i\in\ints{\iota}$ there can exist only one element~$u$ from~$\ints{n-1}$, which violates the statement (\ref{symext:eqn_start}). 

Thus, for at least one element~$w\in\ints{n-1}$ both (\ref{symext:eqn_end}) and (\ref{symext:eqn_start}) are satisfied for all~$i\in\ints{\iota}$.
\end{proof}

Let us introduce the following subgroups of~$\altGr{n}$ induced by elements~$w$ of the set $\ints{n-1}$
\begin{equation*}
 G_w=\setDef{\pi\in\altGr{n}}{\pi(\ints{w})=\ints{w}}\,,
\end{equation*}
i.e.~$G_w$ is the set of all even permutations of~$\ints{n}$ which map~$\ints{w}$ to itself.

To construct a contradiction to the assumption that there is an extension with the above properties we use~\eqref{eqn_section-slack-covectors-extension} and \eqref{eqn_section-slack-covectors-polytope}. Here, we choose $\lambda_x$, $x\in X$, $x=\Lambda(\zeta)$, as follows:
\begin{equation*}
	\lambda_x=\begin{cases}
			1 \quad &\text{if}\quad \zeta\in G_w\\
			-\epsilon \quad &\text{if}\quad \zeta\in G_w \tau\\
			0 \quad &\text{otherwise}\,,
	          \end{cases}
\end{equation*}
where $\tau\in\altGr{n}$ is the cycle  $(n, w+1, w)$ or $(1, w, w+1)$, depending on whether $w$ is equal to $1$ or not, and $G_w \tau$ denotes the right coset for the subgroup $G_w$ and the element $\tau\in\altGr{n}$, i.e. $G_w\tau$ denotes the set $\setDef{\pi\circ\tau}{\pi\in G_w}$.

We would like to guarantee that the inequality~\eqref{eqn_section-slack-covectors-extension} holds for some $\epsilon>0$, i.e.
\begin{align}
	\sum_{x\in X}\lambda_x s_{b}(x)\ge 0\quad &\text{for every}\quad b\in\mathcal{B}\label{symext:eqn_permut-slack-covectors-extension1}\\
	\sum_{x\in X}\lambda_x s_{a^i_t}(x)\ge 0\quad &\text{for every}\quad i\in\ints{\iota},t\in\ints{n}\label{symext:eqn_permut-slack-covectors-extension2}\,.
\end{align}

The left side of~\eqref{symext:eqn_permut-slack-covectors-extension1} could be rewritten as follows:
\begin{align*}
	\sum_{x\in X}\lambda_x s_{b}(x)=\sum_{\pi\in\symGr{n}}\lambda_{\Lambda(\pi)} s_{b}(\Lambda(\pi))=\sum_{\pi\in\symGr{n}}\lambda_{\Lambda(\pi)} s_{b}(\pi.\Lambda(id_n))=\\\sum_{\pi\in G_w} s_{b}(\pi.\Lambda(id_n))-\sum_{\pi\in G_w\tau}\epsilon s_{b}(\pi.\Lambda(id_n))=\sabs{G_w}(1-\epsilon)s_{b}(\Lambda(id_n))\,,
\end{align*}
which is non-negative for all $\epsilon\le 1$.
 
The left side of~\eqref{symext:eqn_permut-slack-covectors-extension2} could be rewritten as follows:
\begin{multline*}
	\sum_{x\in X}\lambda_x s_{a^i_t}(x)=\sum_{\pi\in G_w}s_{a^i_t}(\Lambda(\pi))-\sum_{\pi\in G_w\tau}\epsilon s_{a^i_t}(\Lambda(\pi))=\\
	\sum_{\pi\in G_w}s_{a^i_t}(\pi.\Lambda(id_n))-\epsilon\sum_{\pi\in G_w\tau}s_{a^i_t}(\pi.\Lambda(id_n))\,.
\end{multline*}
For $t\le w$ this expression is equal to
\begin{multline*}
	\sum_{\pi\in G_w}s_{a^i_t}(\pi.\Lambda(id_n))-\epsilon\sum_{\pi\in G_w\tau}s_{a^i_t}(\pi.\Lambda(id_n))=\\
	\sum_{v\le w}\sum_{\substack{\pi\in G_w\\\pi^{-1}(t)=v}}s_{a^i_v}(\Lambda(id_n))- \sum_{v\le { w-1}}\sum_{\substack{\pi\in G_w\tau\\\pi^{-1}(t)=v}}\epsilon s_{a^i_v}(\Lambda(id_n))- \sum_{\substack{\pi\in G_w\tau\\\pi^{-1}(t)=w+1}}\epsilon s_{a^i_v}(\Lambda(id_n))= \\        
	\frac{\sabs{G_w}}{w}(\,\sum_{v\le w}s_{a^i_v}(id_n)-\epsilon\sum_{v\le w-1}s_{a^i_v}(id_n)-\epsilon s_{a^i_{w+1}}(id_n)\,)\,.
\end{multline*}
For $t> w$ this expression is equal to
\begin{multline*}
	\sum_{\pi\in G_w}s_{a^i_t}(\pi.\Lambda(id_n))-\epsilon\sum_{\pi\in G_w\tau}s_{a^i_t}(\pi.\Lambda(id_n))=\\
	\sum_{v\ge w+1}\sum_{\substack{\pi\in G_w\\\pi^{-1}(t)=v}}s_{a^i_v}(\Lambda(id_n))- \sum_{v\ge w+2}\sum_{\substack{\pi\in G_w\tau\\\pi^{-1}(t)=v}}\epsilon s_{a^i_v}(\Lambda(id_n))- \sum_{\substack{\pi\in G_w\tau\\\pi^{-1}(t)=w}}\epsilon s_{a^i_v}(\Lambda(id_n))= \\        
	\frac{\sabs{G_w}}{n-w}(\,\sum_{v\ge w+1}s_{a^i_v}(id_n)-\epsilon\sum_{v \ge w+2}s_{a^i_v}(id_n)-\epsilon s_{a^i_{w}}(id_n)\,)\,.
\end{multline*}

Since for $w\in\ints{n-1}$ conditions~\eqref{symext:eqn_end}, \eqref{symext:eqn_start} are satisfied, we can guarantee that the above expressions are non-negative for some $\epsilon>0$.

But on the other side  for the inequality $\sum_{v\in\ints{w}} x_v\ge \frac{w(w+1)}{2}$, which is valid for the permutahedron, we obtain the inequality~\eqref{eqn_section-slack-covectors-polytope}:
\begin{multline*}
	\sum_{x\in X}\lambda_x(\sum_{v\in\ints{w}} x_v- \frac{w(w+1)}{2})=\\\sum_{\pi\in G_w}(\sum_{v\in\ints{w}} \Lambda(\pi)_v- \frac{w(w+1)}{2})-\epsilon\sum_{\pi\in G_w\tau}(\sum_{v\in\ints{w}} \Lambda(\pi)_v- \frac{w(w+1)}{2})=\\\sum_{\pi\in G_w\tau}-\epsilon<0
\end{multline*}
and finish the proof of Theorem~\ref{symext:thm_lowerbound-perm}.

\section{Cardinality Indicating Polytope}
\label{sec:symextension_card}
\begin{theorem}\label{symext:thm_lowerbound-card}
	For every~$n\ge9$ there exists no symmetric extension of the cardinality indicating polytope~$\PCard{n}$ of size less than~$\frac{n(n-1)}{2}$ with respect to the group~$G=\symGr{n}$. 
\end{theorem}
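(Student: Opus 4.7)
I would follow the blueprint of Theorem~\ref{symext:thm_lowerbound-perm}. Assume, toward a contradiction, that a symmetric extension of $\PCard{n}$ of size less than $\frac{n(n-1)}{2}$ exists. Lemma~\ref{thm:transform_eq} then yields a coordinate-symmetric subspace extension $Q\subseteq\RR^d$ with $d<\frac{n(n-1)}{2}$ together with a coordinate-symmetric section $s:X\to Q$ on the vertex set $X$ of $\PCard{n}$. Since $X\subseteq\GFtwo^n\times\RR^{n+1}$, Theorem~\ref{thm:partition_int} applies and partitions $\ints{d}$ into orbit-classes $\mathcal{A}_1,\ldots,\mathcal{A}_\iota$ of size $n$ together with a residue $\mathcal{B}$, where $\iota<\frac{n-1}{2}$. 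Since $\altGr{n}$ is transitive on subsets of $\ints{n}$ of any given size, the value $s_{a^i_v}((\chi_S,e_{|S|+1}))$ depends only on $|S|$ and on whether $v\in S$, while $s_b((\chi_S,e_{|S|+1}))$ depends only on $|S|$ for $b\in\mathcal{B}$. Accordingly I write $\alpha_i(w):=s_{a^i_v}((\chi_{\ints{w}},e_{w+1}))$ for $v\in\ints{w}$ and $\beta_i(w):=s_{a^i_v}((\chi_{\ints{w}},e_{w+1}))$ for $v\notin\ints{w}$; both are non-negative since $Q$ sits in the non-negative orthant.

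The facet I would violate is the Balas facet $\sum_{i\in\ints{w}}x_i\le\sum_{j=0}^n\min(j,w)\,z_{j+1}$ for a well-chosen $w\in\ints{n-1}$, whose slack at $(\chi_T,e_{|T|+1})$ with $|T|=w$ equals $w-|T\cap\ints{w}|$. The covector I would use is $\lambda_{(\chi_{\ints{w}},e_{w+1})}=1$, $\lambda_{(\chi_T,e_{w+1})}=-\epsilon/[w(n-w)]$ for each of the $w(n-w)$ subsets $T$ with $|T|=w$ and $|T\cap\ints{w}|=w-1$, and $\lambda=0$ elsewhere, so that $\sum_x\lambda_x\cdot\text{slack}(x)=-\epsilon<0$, producing the polytope-side violation~\eqref{eqn_section-slack-covectors-polytope}. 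A direct calculation gives $\sum_x\lambda_x s_b(x)=(1-\epsilon)s_b((\chi_{\ints{w}},e_{w+1}))\ge 0$ for $b\in\mathcal{B}$ and $\epsilon\le 1$, while for the orbit components one obtains $\sum_x\lambda_x s_{a^i_v}(x)=\tfrac{1}{w}\bigl[(w-\epsilon(w-1))\alpha_i(w)-\epsilon\beta_i(w)\bigr]$ for $v\in\ints{w}$ and $\sum_x\lambda_x s_{a^i_v}(x)=\tfrac{1}{n-w}\bigl[((n-w)-\epsilon(n-w-1))\beta_i(w)-\epsilon\alpha_i(w)\bigr]$ for $v\notin\ints{w}$.

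Both orbit expressions are non-negative for sufficiently small $\epsilon>0$ precisely when, for every $i\in\ints{\iota}$, the numbers $\alpha_i(w)$ and $\beta_i(w)$ are simultaneously zero or simultaneously positive. The main obstacle is therefore an analog of Lemma~\ref{symext:lem-exist-element}: producing $w\in\ints{n-1}$ such that $\alpha_i(w)>0\Leftrightarrow\beta_i(w)>0$ for every $i$. My plan to establish it is to bound, for each fixed $i$, the number of "bad" $w$'s (those for which exactly one of $\alpha_i(w),\beta_i(w)$ vanishes) by a small constant. The natural lever is the coordinate-wise projection identity $p(s((\chi_{\ints{w}},e_{w+1})))_u=[u\in\ints{w}]$ for $u\in\ints{n}$, which after invoking the coordinate-symmetry of the projection collapses into a linear identity of the form $\sum_i c_i(\alpha_i(w)-\beta_i(w))=1$ valid for every $w$, supplemented by the $z$-side identities $\sum_j A^z_{k,j}s_j((\chi_{\ints{w}},e_{w+1}))=[k=w+1]$; together these should rule out one-sided-support patterns at more than a constant number of $w$ per orbit. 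Since $\iota<\tfrac{n-1}{2}$ gives $2\iota<n-1$, at least one $w\in\ints{n-1}$ then remains good, and combining the non-negativity of all extension-side sums~\eqref{eqn_section-slack-covectors-extension} with the polytope-side violation yields the desired contradiction.
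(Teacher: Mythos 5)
There is a genuine gap in your proposal, located exactly where you yourself flag it: the analog of Lemma~\ref{symext:lem-exist-element}. Your covector is supported only on vertices of a single cardinality $w$, so the extension-side non-negativity collapses to the requirement that for every orbit $i$ the values $\alpha_i(w)$ and $\beta_i(w)$ are simultaneously zero or simultaneously positive. Your plan to establish the existence of such a $w$ is to bound the number of ``bad'' $w$ per orbit by a small constant using the projection identity $\sum_i c_i(\alpha_i(w)-\beta_i(w))=1$. This does not work: that identity is a single linear constraint aggregated over all orbits, so it controls nothing per orbit. Worse, the claim is simply false. For example a section may have an orbit $\mathcal{A}_i$ with $s_{a^i_v}(x)=x_v$ (this is consistent with Theorem~\ref{thm:partition_int}), giving $\alpha_i(w)=1>0$ and $\beta_i(w)=0$ for every $w\in\ints{n-1}$; then every $w$ is bad for that orbit and no good $w$ exists. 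So the selection lemma you need cannot be proved for your covector, and the contradiction never materializes.

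The paper avoids this by choosing a covector supported on vertices of all cardinalities: $\lambda=1$ on the chain $\ints{0}\subset\ints{1}\subset\cdots\subset\ints{n}$, bumped up to $1+\epsilon$ at $\ints{k^*}$ and paired with weight $-\epsilon$ at the swapped set $\ints{k^*-1}\cup\{k^*+1\}$. With this choice the extension-side expression for orbit $i$ at indices $t=k^*$ and $t=k^*+1$ carries the large positive ``chain'' contribution $\sum_{k<k^*}c^0_i(k)+\sum_{k>k^*}c^1_i(k)$, which only needs to be positive \emph{when} $c^0_i(k^*)$ or $c^1_i(k^*)$ is positive. That is the condition of Lemma~\ref{symext:lem_exist-cardinality}, and it is much weaker than your per-cardinality equivalence. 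Crucially, the bad $k^*$'s in the paper's sense are nested: between any two violating cardinalities all $c^0_i$, $c^1_i$ vanish, so each orbit contributes at most two bad cardinalities, and $\iota<\tfrac{n-1}{2}$ leaves at least one good $k^*$. That counting argument is the heart of the proof, and your single-cardinality covector has no analogous structure to exploit. To repair your proposal you would have to replace the covector by one that, like the paper's, links cardinalities along a chain so that the required non-negativity condition becomes a statement about cumulative sums rather than individual values.
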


Let us introduce the operator~$\Lambda(W)$, which maps every set~$W\subseteq\ints{n}$ to the vector~$(\charVec{W}, e_{ |W|+1})$, where~$e_i\in\RR^{n+1}$ denotes the $i$-th standard basis vector. Thus, we have 
\begin{equation*}
	X=\setDef{\Lambda(W)}{W\subseteq\ints{n}}
\end{equation*} 
and for every permutation~$\pi\in \symGr{n}$ and set~$W\subseteq\ints{n}$ we have
\begin{align*}
	&(\pi.\Lambda(W))_v=\Lambda(W)_{\pi^{-1}(v)}&&\text{ for } 1 \le v \le n\\
	&(\pi.\Lambda(W))_k=\Lambda(W)_k&&\text{ for } n+1 \le k \le 2n+1\,.
\end{align*}
 For the cardinality indicating polytope the group~$\symGr{n}$ does not act transitively on the vertex set~$X$, i.e.~all vertices are divided into orbits corresponding to all possible cardinalities.

Let us assume the contrary, i.e. that there exists a symmetric subspace extension of the cardinality indicating polytope of size less than~$\frac{n(n-1)}{2}$. Applying Theorem~\ref{thm:partition_int} to the cardinality indicating polytope, we conclude that for every set~$W\subseteq\ints{n}$ the value~$s_{a^i_v}(\Lambda(W))$, $i\in\ints{\iota}$, $v\in\ints{n}$ is determined by the cardinality of the set~$W$ and  correctness of the statement~$v\in W$. In the same way, the value~$s_{b}(\Lambda(W))$, $b\in\mathcal{B}$ is determined by the cardinality of the set~$W$. Thus, we can introduce the following notation:
\begin{align*}
 c_i^0(k)&=s_{a^i_v}(\Lambda(W)) \quad&&\text{ for some } v\notin W \text{ and } |W|=k\\
 c_i^1(k)&=s_{a^i_v}(\Lambda(W)) \quad&&\text{ for some } v\in W \text{ and } |W|=k\\
 c_b(k)&=s_{b}(\Lambda(W)) \quad&&\text{ for some } |W|=k\,,
\end{align*}
which are non-negative values.

\begin{lemma}\label{symext:lem_exist-cardinality}
 There exists a cardinality~$k^*\in\ints{n-1}$  such that
\begin{equation}\label{symext:eqn_card}
	\text{if}  \quad c^0_i(k^*)>0 \text{  or  }c^1_i(k^*)> 0 \quad  \text{then} \sum_{0 \le k < k^*}c^0_i(k)+\sum_{k^*<k \le n}c^1 _i(k)>0
\end{equation}
holds for all $i\in\ints{\iota}$.
\end{lemma}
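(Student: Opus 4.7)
The plan is to mimic the strategy of Lemma~\ref{symext:lem-exist-element}: identify a small collection of ``bad'' cardinalities per index~$i$, then count. For each $i\in\ints{\iota}$, I would call $k^*\in\ints{n-1}$ \emph{bad at $i$} when the implication in~\eqref{symext:eqn_card} fails there, i.e.\ $c^0_i(k^*)>0$ or $c^1_i(k^*)>0$ while $c^0_i(k)=0$ for every $0\le k<k^*$ and $c^1_i(k)=0$ for every $k^*<k\le n$ (since all the $c$-values are non-negative, this is the only way the right-hand sum can vanish). The goal is then to show that the number of bad cardinalities, summed over $i$, is strictly less than $n-1$, so some $k^*\in\ints{n-1}$ is good at every $i$ simultaneously.

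The crux is a combinatorial claim: each index $i$ admits at most two bad cardinalities. Suppose $k^*_1<k^*_2$ are both bad at $i$. The badness of $k^*_2$ forces $c^0_i(k^*_1)=0$, and combined with the hypothesis of $k^*_1$ being bad this yields $c^1_i(k^*_1)>0$. Symmetrically, one gets $c^1_i(k^*_2)=0$ and $c^0_i(k^*_2)>0$. If a third bad cardinality $k^*_3>k^*_2$ existed, applying exactly the same reasoning to the pair $(k^*_2,k^*_3)$ would force $c^1_i(k^*_2)>0$, contradicting what the pair $(k^*_1,k^*_2)$ has just given. The hard part here is spotting this chain; once seen, it is mechanical.

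To finish, I would invoke Theorem~\ref{thm:partition_int}: the sets $\mathcal{A}_1,\ldots,\mathcal{A}_\iota$ each have $n$ elements, so $n\iota\le d<\tfrac{n(n-1)}{2}$, giving $\iota<\tfrac{n-1}{2}$ and hence $2\iota<n-1$. Thus at most $2\iota$ elements of $\ints{n-1}$ are bad for some~$i$, leaving at least one $k^*\in\ints{n-1}$ that is good at every index, which is exactly the content of the lemma.
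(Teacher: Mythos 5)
Your proof is correct and follows essentially the same route as the paper's: bound the number of bad cardinalities per index by two, then count using $n\iota\le d<\tfrac{n(n-1)}{2}$, hence $2\iota<n-1$. The paper phrases the two-per-index bound by taking the minimum and maximum bad cardinalities $k^i_{\min}$, $k^i_{\max}$ and observing that both $c^0_i(k)$ and $c^1_i(k)$ vanish for all $k$ strictly in between (so the implication holds vacuously there), which is the same observation driving your three-term contradiction.
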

\begin{proof}
 Since each set~$\mathcal{A}_i$ consists of~$n$ components, we can conclude that $\iota$ is smaller than~$\frac{n-1}{2}$     (recall~$d<\frac{n(n-1)}{2}$). 

For each set~$i\in\ints{\iota}$ there are not more than two cardinalities in $\ints{n-1}$, which do not satisfy~\eqref{symext:eqn_card}.  To prove this assign to~$i$  the minimum cardinality $k^i_{\min}$ and the maximum cardinality $k^i_{\max}$ for which the statement~\eqref{symext:eqn_card} is violated. From \eqref{symext:eqn_card} we can conclude that for all~$k$, $k^i_{\min}< k<k^i_{\max}$ the values $c^0_i(k)$ and $c^1_i(k)$ are equal to $0$. Thus, for all~$k^i_{\min}< k<k^i_{\max}$ the statement~\eqref{symext:eqn_card} holds.

This shows, that there exists at least one cardinality from $1$ till $n-1$ which satisfies \eqref{symext:eqn_card} for all $i\in\ints{\iota}$.
\end{proof}

To construct a contradiction to the assumption that there is an extension with the above properties we use~\eqref{eqn_section-slack-covectors-extension} and \eqref{eqn_section-slack-covectors-polytope}. Here, we choose $\lambda_x$, $x\in X$, where $x=\Lambda(W)$, $W\subseteq \ints{n}$ as follows:
\begin{equation*}
	\lambda_x=\begin{cases}
			1 \quad&\text{if}\quad W=\ints{t}, 0\le t\le n, t\not = k^*\\
			1+\epsilon \quad&\text{if}\quad W=\ints{k^*}\\
			-\epsilon \quad&\text{if}\quad W=\ints{k^*-1}\cap\{k^*+1\}\\
			0 \quad&\text{otherwise}\,.
	          \end{cases}
\end{equation*}

We would like to guarantee that the inequality~\eqref{eqn_section-slack-covectors-extension} holds for some $\epsilon>0$, i.e.
\begin{align}
	\sum_{x\in X}\lambda_x s_{b}(x)\ge 0\quad &\text{for every}\quad b\in\mathcal{B}\label{symext:eqn_card-slack-covectors-extension1}\\
	\sum_{x\in X}\lambda_x s_{a^i_t}(x)\ge 0\quad &\text{for every}\quad i\in\ints{\iota}, t\in\ints{n}\label{symext:eqn_card-slack-covectors-extension2}\,.
\end{align}

The left side of~\eqref{symext:eqn_card-slack-covectors-extension1} could be rewritten as follows:
\begin{align*}
	\sum_{x\in X}\lambda_x s_{b}(x)=\sum_{0\le k\le n}c_b(k) +\epsilon c_b(k^*)- \epsilon c_b(k^*)=\sum_{0\le k\le n}c_b(k)\,,
\end{align*}
which is non-negative for all $\epsilon$.
 
The left side of~\eqref{symext:eqn_card-slack-covectors-extension2} for  $t\notin \{k^*, k^*+1\}$ is equal to :
\begin{align*}
	\sum_{x\in X}\lambda_x s_{a_t^i}(x)=\sum_{0\le k\le t-1}c^0_i(k) + \sum_{t\le k\le n}c^1_i(k)
\end{align*}
and for  $t= k^*$ is equal to :
\begin{multline*}
	\sum_{x\in X}\lambda_x s_{a_t^i}(x)=\sum_{0\le k\le k^*-1}c^0_i(k) + \sum_{k^*\le k\le n}c^1_i(k) - \epsilon c^0_i(k^*)+ \epsilon c^1_i(k^*)=\\\sum_{0\le k< k^*}c^0_i(k) + \sum_{k^*< k\le n}c^1_i(k) - \epsilon c^0_i(k^*)+ (1+\epsilon) c^1_i(k^*)
\end{multline*}
and for  $t= k^*+1$ is equal to :
\begin{multline*}
	\sum_{x\in X}\lambda_x s_{a_t^i}(x)=\sum_{0\le k\le k^*}c^0_i(k) + \sum_{k^*+1\le k\le n}c^1_i(k) - \epsilon c^1_i(k^*)+ \epsilon c^0_i(k^*)=\\\sum_{0\le k< k^*}c^0_i(k) + \sum_{k^*< k\le n}c^1_i(k) - \epsilon c^1_i(k^*)+ (1+\epsilon) c^0_i(k^*)\,.
\end{multline*}
Due to~\eqref{symext:eqn_card} there exists $\epsilon>0$ such that all above expressions are non-negative.

Let us use the inequality
\begin{equation*}
	 \sum_{1\le v\le k^*} x_v -\sum_{1\le k\le k^*}{k z_k}-\sum_{k^*< k\le n}{k^* z_k}\le 0\,,
\end{equation*}
which is valid for $\PCard{n}$, as the inequality in the condition~\eqref{eqn_section-slack-covectors-polytope}. For all vertices $x\in X$ except $\Lambda(\ints{k^*-1}\cup\{k^*+1\})$, the coefficient $\lambda_x$  or the value $-\sum_{1\le v\le k^*} x_v +\sum_{1\le k\le k^*}{k z_k}+\sum_{k^*< k\le n}{k^* z_k}$ is equal to $0$, and thus
\begin{equation*}
 	\sum_{x\in X} \lambda_x (-\sum_{1\le v\le k^*} x_v +\sum_{1\le k\le k^*}{k z_k}+\sum_{k^*< k\le n}{k^* z_k})=\lambda_{\Lambda{\ints{k^*-1}\cap\{k^*+1\}}}=-\epsilon<0\,,
\end{equation*}
which finishes the proof.

\bibliographystyle{plain}
\bibliography{permutahedra-main}

\begin{thebibliography}{1}

\bibitem{Bal85}
Egon Balas.
\newblock Disjunctive programming and a hierarchy of relaxations for discrete
  optimization problems.
\newblock {\em SIAM J. Algebraic Discrete Methods}, 6(3):466--486, 1985.

\bibitem{CCZ09}
Michele Conforti, G\'{e}rard Cornu\'{e}jols, and Giacomo Zambelli.
\newblock Extended formulations in combinatorial optimization.
\newblock {\em 4OR: A Quarterly Journal of Operations Research}, 8:1--48, 2010.

\bibitem{Goe09}
Michel Goemans.
\newblock Smallest compact formulation for the permutahedron.
\newblock {\tt{
  http://math.mit.edu/{\texttildelow}goemans/PAPERS/permutahedron.pdf}}.

\bibitem{KP10}
Volker Kaibel and Kanstantsin Pashkovich.
\newblock Constructing extended formulations from reflection relations.
\newblock In Oktay Günlük and Gerhard Woeginger, editors, {\em Integer
  Programming and Combinatoral Optimization}, volume 6655 of {\em Lecture Notes
  in Computer Science}, pages 287--300. Springer Berlin / Heidelberg, 2011.

\bibitem{KPT09}
Volker Kaibel, Kanstantsin Pashkovich, and Dirk~Oliver Theis.
\newblock Symmetry matters for the sizes of extended formulations.
\newblock arXiv:0911.3712v1 [math.CO].
\newblock submitted.

\bibitem{KLW05}
Matthias K\"oppe, Quentin Louveaux, and Robert Weismantel.
\newblock Intermediate integer programming representations using value
  disjunctions.
\newblock {\em Discrete Optimization}, 5(2):293 -- 313, 2008.
\newblock In Memory of George B. Dantzig.

\bibitem{Yan91}
Mihalis Yannakakis.
\newblock Expressing combinatorial optimization problems by linear programs.
\newblock {\em J. Comput. System Sci.}, 43(3):441--466, 1991.

\end{thebibliography}

\end{document}